\def\sign{\mathop{\rm {sign}}}
\def\eps{\varepsilon}
\def\p{\partial}
\def\a{\alpha}
\def\b{\beta}
\def\S{\Sigma}
\begin{document}
\title{A simple derivation of BV bounds for inhomogeneous  relaxation systems
\thanks{{Received date / Revised version date}
          % The correct dates will be entered by the CMS editor}}
}}
          %For each author, make a block with the following four macros:
\author{
Beno\^{i}t Perthame
\thanks{Sorbonne Universit\'es, UPMC Univ Paris 06, CNRS UMR 7598, Laboratoire Jacques-Louis Lions, F-75005, Paris, 
\newline
CNRS, UMR 7598, Laboratoire Jacques-Louis Lions, F-75005, Paris, France, 
\newline
INRIA Paris-Roquencourt, EPC Mamba (benoit.perthame@upmc.fr).
}
\and Nicolas Seguin \thanks{Sorbonne Universit\'es, UPMC Univ Paris 06, CNRS UMR 7598, Laboratoire Jacques-Louis Lions, F-75005, Paris,
\newline
CNRS, UMR 7598, Laboratoire Jacques-Louis Lions, F-75005, Paris, (nicolas.seguin@upmc.fr).}
 \and  Magali Tournus
 \thanks{Sorbonne Universit\'es, UPMC Univ Paris 06, CNRS UMR 7598, Laboratoire Jacques-Louis Lions, F-75005, Paris
 \newline
 CNRS, UMR 7598, Laboratoire Jacques-Louis Lions, F-75005, Paris, France
 \newline
 Present address : Department of Mathematics, Pennsylvania State University,
University Park, Pennsylvania 16802, USA, (magali.tournus@ann.jussieu.fr).}
}

          %{Put the URL for your home page here if you have one}

          %Use \thanks statements for acknowledgements of grants and
          %support. They will appear below all the authors' addresses, so be
          %specific about which author is thanking whom:

          %\thanks{}

\pagestyle{myheadings} \markboth{A simple detour for BV bounds in relaxations systems}
{Beno\^{i}t Perthame, Nicolas Seguin, Magali Tournus}
\maketitle

\begin{abstract}
We consider relaxation systems of transport equations with heterogeneous source terms and with boundary conditions, which limits are scalar conservation laws. Classical bounds  fail in this context and in particular BV estimates. They are the most standard and simplest way to prove compactness and convergence. 

We provide a novel and simple method to obtain partial BV regularity and strong compactness
in this framework. The standard notion of entropy is not convenient either and we also indicate another, but closely related, notion. We give two examples motivated by renal flows which  consist of 2 by 2 and 3 by 3 relaxation systems with  2-velocities but the method is more general.
\end{abstract}

\begin{keywords} 
Hyperbolic relaxation; spatial heterogeneity; entropy condition; boundary conditions; strong compactness.

{\bf Subject classifications.} 35L03, 35L60, 35B40, 35Q92
\end{keywords}

%------------------------------
\section{Introduction}
\label{sec:intro}
%------------------------------

The usual framework of hyperbolic relaxation \cite{CLL,NAT98,Bouchut} concerns the convergence of a general hyperbolic system with stiff source terms toward 
a conservation law, when the relaxation parameter $\epsilon$ goes to zero.
More specifically, Jin and Xin \cite{Jin_Xin} introduced a $2 \times 2$ linear hyperbolic system with stiff source term that approximates any given conservation law. 
The problem of interest is to prove the convergence of the microscopic quantities depending on $\epsilon$ toward the macroscopic quantities. 
A problem entering the framework of hyperbolic relaxation is motivated by very simplified models of kidney physiology \cite{TESP, TSPTE,These_tournus} and fits the Jin and Xin framework with two major differences, boundary conditions, and spatial dependence of the source term. The type of boundary condition and the spatial dependence constitute the main novelty of the present study.

The system represents two solute concentrations $u_{\epsilon}(x,t)$ and $v_{\epsilon}(x,t)$ and  is written, for $t\geq 0$ and $x \in [0,L]$,
\begin{equation}
\label{a_evo2tubes}
\left\{
\begin{aligned}
&\dfrac{\partial u_{\epsilon}}{\partial t}(x,t)+\dfrac{\partial u_{\epsilon}}{\partial x}(x,t)=\dfrac{1}{\epsilon}\Big[ h(v_{\epsilon}(x,t),x)) - u_{\epsilon}(x,t) \Big],
\\
&\dfrac{\partial v_{\epsilon}}{\partial t }(x,t)-\dfrac{\partial v_{\epsilon}}{\partial x}(x,t)=\dfrac{1}{\epsilon}\Big[ u_{\epsilon}(x,t)-  h(v_{\epsilon}(x,t),x))  \Big],
\\[1mm]
&u_{\epsilon}(0,t)=u_0, \qquad v_{\epsilon}(L,t)=\a u_{\epsilon}(L,t),  \qquad \a \in (0,1),
\\[1mm]
&u_{\epsilon}(x,0) = u^0(x) >0, \qquad v_{\epsilon}(x,0) = v^0(x) >0.
\end{aligned}
\right.
\end{equation}
The question of interest here is to understand the behavior of $u_{\epsilon}$ and $v_{\epsilon}$ when $\epsilon$ vanishes. Another question, addressed in \cite{T}, is to explain the urine concentration mechanism due to the `pumps', on the cell membranes, represented by the nonlinearity $h(v,x)$. 

We make the following hypotheses 
\begin{equation}
 h(0,x)=0, \qquad 1< \beta \leq \dfrac{\p h}{\p v}(v, x) \leq \mu,  
\label{as_h1}
\end{equation}
\begin{equation}
\sup_{v} \displaystyle\int_0^L |\dfrac{\p h}{\p x}(v,x)| dx \leq C ,\qquad \mbox{$h(.,x)$ is not locally affine}.
\label{a_has}
\end{equation}
Here $C$, $\beta$ and $\mu$ are positive constants. Also, we use the following bounds on the initial data 
\begin{equation}
 u^0, \; v^0 \in L^\infty(0,L), \qquad \frac{d}{dx}u^0\in L^1(0,L), \qquad \frac{d}{dx}v^0 \in L^1(0,L),
\label{BV}
\end{equation}
and, the well-prepared initial data condition
\begin{equation}
 u^0(x)=h(v^0(x),x), \qquad x\in[0,L].
\label{WP}
\end{equation}

It is simple and standard to find the formal limit as $\epsilon$ vanishes. Adding the two equations of \eqref{a_evo2tubes}, we find the conservation law 
\begin{equation}
\dfrac{\partial [u_{\epsilon}+ v_{\epsilon}](x,t)}{\partial t}+\dfrac{\partial [u_{\epsilon}- v_{\epsilon}](x,t)}{\partial x} =0.
\label{CLeps}
\end{equation}
We expect that, in the limit, the idendity holds
\begin{equation*}
u(x,t)= h(v(x,t),x)
\end{equation*}
and this  identifies  the limiting quasi-linear conservation law 
\begin{equation}
\dfrac{\partial [h(v(x,t),x)+ v(x,t)]}{\partial t}+\dfrac{\partial [h(v(x,t),x) - v(x,t)]}{\partial x} =0,
\label{CLlim}
\end{equation}
and the boundary conditions on $\rho(x,t)= h(v(x,t),x)+ v(x,t)$, namely $\rho(0,t) = u_0 + h^{-1}(u_0,0)$ and $\rho(L,t)$ is free, can be identified via the standard argument of Bardos-Le Roux and N\'ed\'elec, \cite{BLN}, using the adapted Kru$\check{\mbox{z}}$kov entropies that we introduce in Section~\ref{sec:entrop}.

The justification poses particular difficulties due to the boundary condition and $x$-dependent flux.

One difficulty is that, in order to justify the above limit, an entropy identity  is needed for two reasons. Firstly  to prove that equilibrium is reached \cite{Tzavaras_relative}. Secondly, because at the limit $\epsilon =0$, a quasi-linear equation arises, shocks can be produced. Therefore, an entropy formulation is needed to define uniquely the solutions.  When applied to this conservation equation, usual convex entropies as both in \cite{Dafermos} or Kru\~{z}kov \cite{kruz}, contain a term with a derivative of $h$ with respect to $x$. 
When the regularity of $h$ is limited to BV, $h_x$ is a measure and the weak entropy formulation is not well defined. One of our goal is to present a convenient notion of entropy with $x$-dependent relaxation in order to  get rid of this problem. We use an entropy formulation introduced in \cite{AUD} and adapted to scalar conservation laws with spatial heterogeneities.

Another difficulty is that, in order to prove the validity of the limit, strong compactness is also needed. Several methods have been developed in this goal. Uniform bounds in the space of functions with bounded variation (BV in short) is the strongest method and the most standard. Such bounds are proved in  \cite{Nat_Ter}  for a similar system with boundary conditions in the homogeneous case; the result is extended with a source term in  \cite{ShuYi}. More elaborate tools are the compensated compactness method \cite{MUR} or the kinetic formulation and averaging lemmas \cite{GLPS,PER2} (in particular extending  the measure valued solutions of DiPerna \cite{dipernaMVS}). These methods  are weaker, because they give only convergence but no bounds, and thus apply to more general situations than BV bounds.  Because of  the spatial $x$-dependence of $h$,  the BV framework is not available as today for  \eqref{a_evo2tubes}. Whereas time BV estimates follow immediately from the equations, compactness in the spatial 
direction cannot be obtained in this way.  We propose a new method to prove spatial compactness. It does not use BV bounds on each component, but gives BV bounds in $x$ for a single quantity and can be applied to 
spatially heterogeneous systems. 
We are going to prove that

\begin{proposition}
We make  assumptions \eqref{as_h1}, \eqref{a_has}, \eqref{BV}, \eqref{WP} and fix a time $T$. We have
\\
(i)   $u_{\epsilon}$ and $v_{\epsilon}$ are uniformly bounded in $L^\infty ([0,L]\times[0,T])$,
\\
(ii)  $\frac{\partial}{\partial t}u_{\epsilon}$, $\frac{\partial}{\partial t}v_{\epsilon}$ and $\frac{\partial}{\partial x} [u_{\epsilon} -v_{\epsilon}]$ are bounded in $L^\infty ([0,T]; L^1(0,L))$,
\\
(iii)  there exists $v \in L^{\infty} ([0,L]\times[0,T])$ such that
\begin{equation}
 u_{\epsilon}(x,t) \underset{\epsilon \rightarrow 0}{\longrightarrow}h(v(x,t),x), \qquad v_{\epsilon}(x,t) \underset{\epsilon \rightarrow 0}{\longrightarrow}v(x,t), \; a.e.  
\end{equation}
(iv) the equation \eqref{CLlim} is satisfied and entropy inequalities hold, see section~\ref{sec:entrop}. 
\label{propo}
\end{proposition}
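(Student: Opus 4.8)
The plan is to run the argument in the order (ii) $\Rightarrow$ (i) $\Rightarrow$ (iii) $\Rightarrow$ (iv), since the time-$BV$ estimate is the engine that drives everything and, crucially, requires no $L^\infty$ control. Write $\rho_\epsilon=u_\epsilon+v_\epsilon$, $j_\epsilon=u_\epsilon-v_\epsilon$, $q_\epsilon=u_\epsilon-h(v_\epsilon,x)$, and set $\Phi(v,x)=h(v,x)-v$, which by \eqref{as_h1} satisfies $\Phi_v=\p_v h-1\ge \b-1>0$ and $\Phi(0,x)=0$, hence is strictly increasing and invertible in $v$. First I would differentiate \eqref{a_evo2tubes} in $t$ and run an $L^1$ estimate on $(\p_t u_\epsilon,\p_t v_\epsilon)$: multiplying the two differentiated equations by $\sign(\p_t u_\epsilon)$ and $\sign(\p_t v_\epsilon)$, the stiff relaxation contributions cancel \emph{exactly} in the sum because the common factor $\p_v h\in[\b,\mu]$ enters with opposite signs, while the transport boundary terms collapse to $-(1-\a)\,|\p_t u_\epsilon(L,t)|-|\p_t v_\epsilon(0,t)|\le 0$, using $\p_t u_\epsilon(0,t)=0$ and $\p_t v_\epsilon(L,t)=\a\,\p_t u_\epsilon(L,t)$. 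Here the well-prepared condition \eqref{WP} is essential: it makes the source vanish at $t=0$, so that $\p_t u_\epsilon(\cdot,0)=-\tfrac{d}{dx}u^0$ and $\p_t v_\epsilon(\cdot,0)=\tfrac{d}{dx}v^0$ lie in $L^1$ by \eqref{BV}, uniformly in $\epsilon$. This proves (ii) for $\p_t u_\epsilon,\p_t v_\epsilon$, and then $\p_x j_\epsilon=-\p_t\rho_\epsilon$ from \eqref{CLeps} delivers the spatial bound on $\p_x[u_\epsilon-v_\epsilon]$ at no extra cost; this is the ``detour''.

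For (i) I would first prove positivity $u_\epsilon,v_\epsilon\ge 0$ by a maximum-principle argument along characteristics, using $h(0,x)=0$ and the sign of the boundary data. The difficulty, and the reason classical invariant rectangles are unavailable, is that the $x$-dependence of $h$ destroys any componentwise $L^\infty$ bound; so I close through $j_\epsilon$ instead. Integrating \eqref{CLeps} and using $j_\epsilon(0,t)=u_0-v_\epsilon(0,t)\le u_0$ and $j_\epsilon(L,t)=(1-\a)u_\epsilon(L,t)\ge 0$ gives $\int_0^L\rho_\epsilon(\cdot,t)\,dx\le \int_0^L(u^0+v^0)\,dx+u_0T=:M$; combined with the uniform bound on $\int_0^L|\p_x j_\epsilon|$ from (ii), a mean-plus-total-variation inequality bounds $\|j_\epsilon\|_\infty\le K$ uniformly. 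Finally, along the $v$-characteristics one has $\tfrac{d}{dt}v_\epsilon=\tfrac1\epsilon\big(j_\epsilon-\Phi(v_\epsilon,x)\big)$, which is \emph{stable} in $v_\epsilon$ since $\Phi_v>0$; as $|j_\epsilon|\le K$, $\Phi^{-1}(K,x)\le K/(\b-1)$ and $v_\epsilon(L,t)=\tfrac{\a}{1-\a}j_\epsilon(L,t)$, a comparison argument yields $v_\epsilon\le \max\big(\|v^0\|_\infty,\tfrac{\a}{1-\a}K,K/(\b-1)\big)$, whence $u_\epsilon=v_\epsilon+j_\epsilon$ is bounded too, giving (i).

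For (iii) the first ingredient is strong compactness of the single quantity $j_\epsilon$: being uniformly bounded in $L^\infty$ with $\p_t j_\epsilon,\p_x j_\epsilon$ uniformly in $L^1$, it is bounded in $BV([0,L]\times[0,T])$, hence precompact in $L^1$, so a subsequence converges a.e.\ to some $j$. The second ingredient is that equilibrium is reached, $q_\epsilon\to0$; for this I would use the entropy $\eta=\tfrac12u^2+\int_0^v h(s,x)\,ds$, whose balance is $\p_t\eta+\p_x\big(\tfrac12u^2-\int_0^v h(s,x)\,ds\big)=-\tfrac1\epsilon q_\epsilon^2-\int_0^v \p_x h(s,x)\,ds$. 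Integrating in $(x,t)$, the $L^\infty$ bounds of (i) together with \eqref{a_has} control the $x$-remainder and the boundary fluxes (favourable since $\a<1$), forcing $\tfrac1\epsilon\iint q_\epsilon^2\le C$, i.e.\ $q_\epsilon\to0$ in $L^2$. Since $\Phi(v_\epsilon,x)=j_\epsilon-q_\epsilon$, inverting the monotone map $\Phi$ gives $v_\epsilon=\Phi^{-1}(j_\epsilon-q_\epsilon,x)\to\Phi^{-1}(j,x)=:v$ a.e., and $u_\epsilon=h(v_\epsilon,x)+q_\epsilon\to h(v,x)$ a.e., which is (iii). Passing to the limit in \eqref{CLeps} by boundedness and a.e.\ convergence yields \eqref{CLlim}, and passing to the limit in the entropy balance (dropping the nonnegative dissipation) yields the entropy inequalities in the adapted, $x$-dependent sense of Section~\ref{sec:entrop}, giving (iv).

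I expect the genuine obstacles to be two. The first is conceptual and is resolved by the detour above: because no componentwise spatial $BV$ bound exists, one must bound only $u_\epsilon-v_\epsilon$ through the conservation law and then recover the pointwise bounds on each component from the monotonicity of $\Phi$. The second is technical: controlling the boundary entropy fluxes and the remainder $\int_0^v\p_x h(s,x)\,ds$ in the dissipation estimate so that $\tfrac1\epsilon\iint q_\epsilon^2$ stays bounded, and identifying the effective boundary condition for the limit \eqref{CLlim} via the Bardos--Le Roux--N\'ed\'elec analysis \cite{BLN} with the adapted Kru\v{z}kov entropies.
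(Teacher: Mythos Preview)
Your proof is correct, and for (ii), (iii), (iv) it follows the same path as the paper: well-prepared data kills the source at $t=0$, the $L^1$ contraction on time derivatives propagates, the conservation law \eqref{CLeps} converts the time bound into a spatial bound on $u_\epsilon-v_\epsilon$, and then BV-compactness of $j_\epsilon$ together with the entropy-dissipation estimate $q_\epsilon\to0$ (with the same quadratic entropy $S=u^2/2$, $\Sigma=\int_0^v h$) and the invertibility of $\Phi(v,x)=h(v,x)-v$ give the a.e.\ convergence. One small wording point: the relaxation contributions in the $L^1$ estimate do not ``cancel exactly'' but combine to a nonpositive term (equality only when $\sign(\p_t u_\epsilon)=\sign(\p_t v_\epsilon)$); the conclusion is unaffected.

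The genuine difference is in (i) and in the logical order. The paper proves the $L^\infty$ bound \emph{first} and independently of (ii), by building a stationary supersolution: solving the steady problem, it observes that $U_\epsilon-V_\epsilon\equiv K_\epsilon$ is constant and trapped in $[0,U_0]$ by the boundary data, and a one-variable maximum-principle argument then bounds $U_\epsilon$ uniformly; the comparison principle traps $(u_\epsilon,v_\epsilon)$ below $(U_\epsilon,V_\epsilon)$. Your route instead bootstraps (i) from (ii): the BV bound on $j_\epsilon$ plus the $L^1$ mass bound give $\|j_\epsilon\|_\infty$ uniformly, and the monotone ODE $\tfrac{d}{ds}v_\epsilon=\tfrac1\epsilon(j_\epsilon-\Phi(v_\epsilon,x))$ along backward characteristics then bounds $v_\epsilon$ by comparison. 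Your argument is more self-contained (no separate analysis of the stationary problem is needed) but, through (ii), it inherits the well-prepared hypothesis \eqref{WP}; the paper's supersolution argument for (i) uses only the comparison principle and holds without \eqref{WP}.
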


However it is not correct that  $\frac{\partial}{\partial x}u_{\epsilon}$ and  $\frac{\partial}{\partial x}v_{\epsilon}$ are separately bounded in $L^\infty ([0,T]; L^1(0,L))$.

%-----------------------------
\section{$L^\infty$ bound}
\label{sec:Linfty}
%------------------------------

We first prove uniform estimates. Unlike the homogeneous case, $L^\infty$ bounds are not always true and the most general existence theory is in $L^1$, see \cite{ALD}. Here we build particular sub and supersolutions of  \eqref{a_evo2tubes} which are uniformly bounded in $\epsilon$.

\begin{lemma}
 The solution of \eqref{a_evo2tubes} satisfies the uniform estimate
\begin{equation}
 \| u_{\epsilon}\|_{L^{\infty}([0,L]\times[0,T])}  \leq K(\beta,u_0,u^0,v^0), \qquad
 \| v_{\epsilon}\|_{L^{\infty}([0,L]\times[0,T])}  \leq K(\beta,u_0,u^0,v^0) .
\label{evolest2}
\end{equation}
\label{lm:nound}
\end{lemma}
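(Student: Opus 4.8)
The plan is to obtain \eqref{evolest2} from a comparison (maximum) principle for the \emph{cooperative} structure of \eqref{a_evo2tubes}, together with explicit $\epsilon$-uniform sub- and super-solutions. The first step is to record monotonicity: in the first equation the source $\frac1\epsilon[h(v,x)-u]$ is nondecreasing in $v$ (as $\p_v h\ge\beta>0$) and nonincreasing in $u$, and symmetrically in the second equation, so ordered data stay ordered. The operators $\p_t+\p_x$ and $\p_t-\p_x$ transport the inflow data from $x=0$ for $u_\epsilon$ and from $x=L$ for $v_\epsilon$, i.e. exactly where the two boundary values are prescribed; this is what makes such a comparison well posed.

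For the lower bound I would check that $(\underline u,\underline v)=(0,0)$ is a subsolution. Using $h(0,x)=0$ both source terms vanish, the inflow requirements $0\le u_0$ and $0\le\alpha u_\epsilon(L,t)$ hold, and $0\le u^0,v^0$ by \eqref{BV}; hence $u_\epsilon,v_\epsilon\ge0$, which in turn gives $h(v_\epsilon,x)\ge0$ and keeps the reflecting condition at $x=L$ consistent.

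The real content is the super-solution, which I would seek just above the equilibrium manifold, $\bar u=h(\bar v,x)+\epsilon\,d(x)$ with $\bar v=\bar v(x)$ stationary. Inserting this, the stiff $\frac1\epsilon$ terms collapse to $\pm d$, and to leading order the two super-solution inequalities become $-\bar v'\ge d$ and $(\p_v h)\,\bar v'+\p_x h+d\ge0$; eliminating $d$ leaves the single condition $(\p_v h-1)\bar v'+\p_x h\ge0$. Since $\p_v h-1\ge\beta-1>0$, this holds as soon as $\bar v$ is nondecreasing with slope large enough to dominate $\p_x h$, which is integrable in $x$ by \eqref{a_has}. The constant $\bar v(0)$ is then chosen so that $\bar u(0)=h(\bar v(0),0)\ge u_0$ and $\bar v(x)\ge\|v^0\|_{L^\infty}$, whence by the well-prepared condition \eqref{WP} also $\bar u(x,0)\ge u^0(x)$; at $x=L$ one imposes $\bar v(L)\ge\alpha\bar u(L)$, where $\alpha<1$ enters. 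Running the comparison principle with these ordered, $\epsilon$-independent barriers then yields $0\le u_\epsilon\le\bar u$ and $0\le v_\epsilon\le\bar v$, which is \eqref{evolest2}.

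The main obstacle is precisely the super-solution. At genuine equilibrium ($d\equiv0$) the two inequalities force $\bar u$ nondecreasing and $\bar v$ nonincreasing at once, which is impossible as soon as $\p_x h\neq0$; allowing the $O(\epsilon)$ defect $d$ is the device that decouples them and lets the spatial heterogeneity be absorbed through the gap $\beta-1>0$. The second delicate point is the coupled boundary $v_\epsilon(L,t)=\alpha u_\epsilon(L,t)$: the comparison must be closed so that the inequality $\bar u(L)\ge u_\epsilon(L)$, already available there, feeds into $\bar v(L)\ge\alpha\bar u(L)\ge\alpha u_\epsilon(L)$. It is this self-consistent treatment of the reflecting condition, rather than a decoupled boundary estimate, that makes $\alpha<1$ decisive and confirms that $L^\infty$ control---known to fail for general inhomogeneous relaxation---does hold under \eqref{as_h1}--\eqref{WP}.
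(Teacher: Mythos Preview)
Your overall strategy---comparison principle for the cooperative structure, with $(0,0)$ as subsolution and a stationary, $\epsilon$-uniform super-solution---is exactly the paper's. The difference, and the gap, lies in the super-solution.

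The paper does \emph{not} build a barrier near the equilibrium manifold. It takes as super-solution the \emph{exact} stationary solution $(U_\epsilon,V_\epsilon)$ of \eqref{a_evo2tubes} with inflow $U_0\ge u_0$ and the same reflecting condition $V_\epsilon(L)=\alpha U_\epsilon(L)$. Summing the two stationary equations gives the conservation $U_\epsilon(x)-V_\epsilon(x)\equiv K_\epsilon$; the boundary data then yield $0\le K_\epsilon\le U_0$ and $U_\epsilon(L)=K_\epsilon/(1-\alpha)$. At an interior maximum $x_0$ of $U_\epsilon$ one has $U_\epsilon(x_0)=h(U_\epsilon(x_0)-K_\epsilon,x_0)\ge\beta\bigl(U_\epsilon(x_0)-K_\epsilon\bigr)$, hence $U_\epsilon(x_0)\le\frac{\beta}{\beta-1}K_\epsilon$. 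The derivative $\p_x h$ never enters.

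Your near-equilibrium ansatz $\bar u=h(\bar v,x)+\epsilon d$ cannot in general close at $x=L$. From $h(0,x)=0$ and $\p_v h\ge\beta$ one has $h(\bar v(L),L)\ge\beta\,\bar v(L)$, so
\[
\alpha\,\bar u(L)\;\ge\;\alpha\beta\,\bar v(L)+O(\epsilon).
\]
The requirement $\bar v(L)\ge\alpha\bar u(L)$ thus forces $(1-\alpha\beta)\,\bar v(L)\ge O(\epsilon)$, which is impossible for $\bar v(L)>0$ once $\alpha\beta>1$---a case the hypotheses $\alpha\in(0,1)$, $\beta>1$ do not exclude. The point is that the stationary super-solution is \emph{far} from equilibrium: it obeys $U_\epsilon-V_\epsilon=\text{const}$, not $U_\epsilon\approx h(V_\epsilon,x)$, and it is precisely this non-equilibrium relation that makes the reflecting condition compatible with $\alpha<1$ alone.

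A smaller issue: assumption \eqref{a_has} controls $\sup_v\int_0^L|\p_x h(v,x)|\,dx$, with the supremum \emph{outside} the integral. Your construction needs $\bar v'(x)\ge|\p_x h(\bar v(x),x)|/(\beta-1)$ pointwise, and integrating this along a curve where $\bar v(x)$ itself varies is not bounded by \eqref{a_has} without the stronger hypothesis $\int_0^L\sup_v|\p_x h(v,x)|\,dx<\infty$.
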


\begin{proof}
To obtain an $L^{\infty}$ bound on the time-dependent solution, we follow the approach of \cite{AUD} and use the comparison principle with appropriate supersolution.
Indeed, because of the $x-$dependence of $h$, constant
functions are not super-solution of the stationary problem.
We introduce the stationary version of \eqref{a_evo2tubes}

\begin{equation}
\label{P1C3_2tubes}
\left\{
\begin{aligned}
\dfrac{d U_{\epsilon}}{d x}(x)&=\dfrac{1}{\epsilon}\Big[ h(V_{\epsilon}(x),x)) - U_{\epsilon}(x) \Big],
\\
-\dfrac{d V_{\epsilon}}{d x}(x)&=\dfrac{1}{\epsilon}\Big[ U_{\epsilon}(x)-  h(V_{\epsilon}(x),x))  \Big],
\\[3mm]
U_{\epsilon}(0)&=U_0 >0, \qquad V_{\epsilon}(L)=\a U_{\epsilon}(L).
\end{aligned}
\right.
\end{equation}

We are going to prove that there exists a smooth super solution $(U_{\epsilon},V_{\epsilon})$ of the stationary problem \eqref{P1C3_2tubes}, and a constant $K(U_0,\beta)>0$ such that
\begin{equation}
0 \leq U_{\epsilon} (x) \leq K(U_0,\beta) , \qquad  0 \leq V_{\epsilon} \leq K(U_0,\beta).
 \label{a_supersol}
\end{equation}
This concludes the proof of Lemma~\ref{lm:nound} because   a solution of \eqref{P1C3_2tubes} where $U_0\geq u_0$, is a super-solution of \eqref{a_evo2tubes}, and the comparison principle gives $0\leq u_{\epsilon} \leq U_{\epsilon}$ and $0 \leq v_{\epsilon} \leq V_{\epsilon}$. 

Because it has  been proved in \cite{TESP} that \eqref{P1C3_2tubes} admits a unique solution 
 which lies in 
BV$([0,L]\times[0,T])$ (fixed point argument for the existence and contraction for the uniqueness), it remains to  prove that a solution of \eqref{P1C3_2tubes} with $U_0 \geq u_0$ is uniformly bounded in $\epsilon$.

Adding the two lines of \eqref{P1C3_2tubes}, we obtain a quantity which does not depend on $x$,
\begin{equation}
\label{CM}
 U_{\epsilon}(x)-V_{\epsilon}(x) =: K_{\epsilon}.
 \end{equation}
Using the boundary values, we find uniform bounds on $K_{\epsilon}$
\begin{equation}
K_{\epsilon}= U_0-V_{\epsilon}(0) \leq U_0 \qquad K_{\epsilon}=U_{\epsilon}(L) -V_{\epsilon}(L) = (1- \a) U_{\epsilon}(L) \geq 0,
\label{eq1}
\end{equation}
and thus
\begin{equation}
 0\leq K_{\epsilon} \leq U_0, \qquad U_{\epsilon}(L) \leq \dfrac{U_0}{1-\a}.
\label{eq2}
\end{equation}
Hence, we just have to prove that $U_{\epsilon}$ is uniformly bounded in $L^{\infty}$, knowing that $U_0$ and $U_{\epsilon}(L)$ are uniformly bounded in $\mathbb{R}$.
For that, we use the maximum principle assuming $\mathcal{C}^1$ regularity (one can easily  justify it for a regularized function $h(v,x)$ and pass to the limit).
Indeed, if $U_{\epsilon}$ reaches its maximal value on the boundary, the result follows from \eqref{eq2}. If $U_{\epsilon}$ reaches its maximal value at $x_0 \in ]0,L[$, then,
\begin{equation}
 0=\dfrac{dU_{\epsilon}}{d x}(x_0)=\dfrac{1}{\epsilon}\Big(h(U_{\epsilon}(x_0)-K_{\epsilon},x_0)-U_{\epsilon}(x_0)\Big),
\end{equation}
and thus by assumption \eqref{as_h1}, 
\begin{equation}
U_{\epsilon}(x_0) = h(U_{\epsilon}(x_0)-K_{\epsilon},x_0) \geq \b U_{\epsilon}(x_0)- \b K_{\epsilon}.
\end{equation}
Finally, \eqref{a_supersol} is proved because the above inequality gives 
\begin{equation}
 U_{\epsilon}(x_0)\leq \dfrac{\beta}{\beta -1} K_{\epsilon}.
\end{equation}

\end{proof}

%--------------------------------------
\section{Adapted (heterogeneous) entropies}
\label{sec:entrop}
%--------------------------------------

As explained in the introduction, entropies are useful to derive additional bounds and to characterize the limit as $\epsilon $ vanishes, see \cite{James,CLL, Tzavaras_relative}. We are going to use specific entropies adapted to  spatial dependence.

We recall the usual approach which is to define
\begin{definition}[Entropy pair]
 We  call an entropy pair for the system \eqref{a_evo2tubes} a couple of functions $(S, \, \Sigma)$ in BV$\big([0,1],\mathcal{C}(\mathbb{R}) \big)$ which satisfy
\begin{equation}
(i) \; S(.,x) \quad \mbox{and} \quad \Sigma(.,x) \;\mbox{are convex}, \qquad 
(ii) \; \dfrac{\partial S}{\partial u} (h(v,x),x) =\dfrac{\partial  \Sigma}{\partial v}(v,x).
\label{ad:entropy}
\end{equation}
\label{entropy}
\end{definition}

For such  entropy pairs, it is immediate to check that
\begin{equation*}
 \begin{aligned}
 \dfrac{\partial }{\partial t}[\;S(u_{\epsilon},x)&+ \Sigma(v_{\epsilon},x)  \; ]+ \dfrac{\partial}{\partial x}[\;S(u_{\epsilon},x)-\Sigma(v_{\epsilon},x)  \; ] \\
&=  \dfrac{1}{\epsilon}  \Big(\dfrac{\partial S}{\partial v}(u_{\epsilon},x) -\dfrac{\partial \S}{\partial v}(v_{\epsilon},x)  \Big) \Big( h(v_{\epsilon},x)-u_{\epsilon}\Big) 
 + \dfrac{\partial S}{\partial x}(u_{\epsilon},x)-\dfrac{\partial \S}{\partial x}(v_{\epsilon},x).\\
\end{aligned}
\end{equation*}
Now, using \eqref{ad:entropy}(ii), we can write
\begin{equation*}
 \begin{aligned}
 \dfrac{\partial }{\partial t}[ &S(u_{\epsilon},x)+ \Sigma(v_{\epsilon},x)  \; ] + \dfrac{\partial }{\partial  x}[\;S(u_{\epsilon},x)-\Sigma(v_{\epsilon},x)  \; ] 
 \\
&=  \dfrac{1}{\epsilon}  \Big(\dfrac{\partial S}{\partial v}(u_{\epsilon},x) -\dfrac{\partial S}{\partial v}(h(v_{\eps},x),x)  \Big) \Big( h(v_{\epsilon},x)-u_{\epsilon}\Big) 
 + \dfrac{\partial S}{\partial x}(u_{\epsilon},x)- \dfrac{\partial \S}{\partial x}(v_{\epsilon},x)
\\ 
& \leq  \dfrac{\partial S}{\partial x}(u_{\epsilon},x)- \dfrac{\partial \S}{\partial x}(v_{\epsilon},x),
\label{dissip}
\end{aligned}
\end{equation*}
because $S$ is convex with respect to its first variable and thus  $\dfrac{\partial S}{\partial v}$  is non decreasing with respect to its first variable.
\\

The shortcoming of Definition~\ref{entropy} is that the above  right hand side is not always well defined for $u_{\epsilon}$ and $v_{\epsilon}$ BV functions. Indeed, being given $S(u,x)$, we compute 
$$
 \Sigma(v,x)=\int_0^v S'_u\big(h( \bar v,x), x \big) d \bar v 
$$
and the expression for the $x$-derivative
$$
\dfrac{\partial  \Sigma }{\partial x}(v,x)  =  \int_0^v \left[ S''_{uu}\big(h( \bar v,x), x \big) h_x(\bar v,x)  + \cdots \right] d \bar v
$$
does not make intrinsic sense.

However, this entropy inequality is enough to prove that equilibrium is reached. Because the expressions $ \dfrac{\partial S}{\partial x}(u_{\epsilon},x)$ and $\dfrac{\partial \S}{\partial x}(v_{\epsilon},x)$ are bounded thanks to our assumptions on $h$, we may choose  
$$
S(u)=\dfrac{u^2}{2}, \qquad \Sigma(v,x)=\int_0^v h(\bar v,x)d\bar v,
$$
and using the entropy dissipation, with assumption \eqref{a_has}, we find after integration of equality \eqref{dissip} in ($x$,$t$), that for all $T>0$ for some constant $C(T)$ which does not depend on $\eps$, it holds 
$$
\frac 1 \epsilon \int_0^T \int_0^L \big|  u_{\epsilon}(x,t) -h(v_{\epsilon}(x,t) ,x) \big| dx dt \leq C(T).
$$
Therefore,  we arrive at the conclusion
\begin{proposition}
For $u_{\epsilon}$ and $v_{\epsilon}$ solutions of \eqref{a_evo2tubes}, we have the convergence
 \begin{equation}
 u_{\epsilon}-h(v_{\epsilon},x) \underset{\epsilon \longrightarrow 0}{\longrightarrow} 0 ,\qquad L^2\Big([0,L]\times[0,T]\Big).
 \end{equation}
\label{L2}
\end{proposition}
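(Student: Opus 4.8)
The plan is to run the entropy balance from the preceding computation with the single quadratic entropy and to read off the dissipation as an exact negative square. First I would fix the entropy pair
\[
S(u) = \frac{u^2}{2}, \qquad \Sigma(v,x) = \int_0^v h(\bar v,x)\, d\bar v,
\]
which is admissible in the sense of Definition~\ref{entropy} because $\partial_u S(h(v,x),x) = h(v,x) = \partial_v \Sigma(v,x)$, and whose derivative $S'$ is the identity. With this choice the relaxation term in the entropy balance is no longer merely signed but an exact square,
\[
\frac{1}{\epsilon}\bigl(S'(u_\epsilon) - S'(h(v_\epsilon,x))\bigr)\bigl(h(v_\epsilon,x) - u_\epsilon\bigr) = -\frac{1}{\epsilon}\bigl(u_\epsilon - h(v_\epsilon,x)\bigr)^2,
\]
so that, writing $\eta_\epsilon = S(u_\epsilon) + \Sigma(v_\epsilon,x)$ and $q_\epsilon = S(u_\epsilon) - \Sigma(v_\epsilon,x)$, the preceding entropy balance becomes
\[
\partial_t \eta_\epsilon + \partial_x q_\epsilon + \frac{1}{\epsilon}\bigl(u_\epsilon - h(v_\epsilon,x)\bigr)^2 = -\partial_x \Sigma(v_\epsilon,x),
\]
the right-hand side being the only surviving explicit $x$-derivative, since $S$ carries no $x$-dependence.

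Next I would integrate over $[0,L]\times[0,T]$ and isolate the dissipation,
\[
\frac{1}{\epsilon}\int_0^T\!\!\int_0^L \bigl(u_\epsilon - h(v_\epsilon,x)\bigr)^2 dx\,dt = \int_0^L \bigl[\eta_\epsilon(x,0) - \eta_\epsilon(x,T)\bigr] dx + \int_0^T \bigl[q_\epsilon(0,t) - q_\epsilon(L,t)\bigr] dt - \int_0^T\!\!\int_0^L \partial_x \Sigma(v_\epsilon,x)\, dx\,dt,
\]
and then bound every term on the right uniformly in $\epsilon$. The terminal energy is nonnegative (both $S \geq 0$ and, using $h(0,x)=0$ with $\partial_v h > 0$ from \eqref{as_h1}, $\Sigma(\cdot,x) \geq 0$ on nonnegative arguments), hence may be discarded, while the initial energy is controlled by the $L^\infty$ data. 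The two boundary fluxes $q_\epsilon(0,t)$ and $q_\epsilon(L,t)$ are bounded because the traces of $u_\epsilon$ and $v_\epsilon$ are uniformly bounded there by Lemma~\ref{lm:nound} together with the boundary conditions $u_\epsilon(0,t) = u_0$ and $v_\epsilon(L,t) = \alpha u_\epsilon(L,t)$. Finally, since $\partial_x \Sigma(v_\epsilon,x) = \int_0^{v_\epsilon} h_x(\bar v,x)\, d\bar v$, Fubini together with the uniform bound $v_\epsilon \leq K$ and assumption \eqref{a_has} gives
\[
\int_0^T\!\!\int_0^L \Bigl|\int_0^{v_\epsilon} h_x(\bar v,x)\, d\bar v\Bigr| dx\,dt \leq T \int_0^K \Bigl(\int_0^L |h_x(\bar v,x)|\, dx\Bigr) d\bar v \leq T K C.
\]
Thus the whole right-hand side is bounded by a constant $C(T)$ independent of $\epsilon$.

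From here the conclusion is immediate: multiplying through by $\epsilon$,
\[
\int_0^T\!\!\int_0^L \bigl(u_\epsilon - h(v_\epsilon,x)\bigr)^2 dx\,dt \leq C(T)\,\epsilon \longrightarrow 0 \quad \text{as } \epsilon \to 0,
\]
which is precisely the asserted $L^2$ convergence. I expect the only real difficulty to be the rigorous justification of the entropy balance at the available regularity: the identity must be understood in the integrated sense, and the clean route is to carry out the computation on a regularized flux $h$ — for which the chain rule and the manipulations of Lemma~\ref{lm:nound} are licit — and then pass to the limit, checking that the boundary-trace bounds and the $\partial_x \Sigma$ estimate are stable under the regularization. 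It is worth stressing that the space and time derivative terms enter as genuine total derivatives and collapse to boundary and initial data, so that no interior BV control of $u_\epsilon$ or $v_\epsilon$ individually is required — which is essential here, since precisely such control is unavailable.
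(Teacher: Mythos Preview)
Your proof is correct and follows essentially the same route as the paper: choose the quadratic entropy pair $S(u)=u^2/2$, $\Sigma(v,x)=\int_0^v h(\bar v,x)\,d\bar v$, integrate the entropy balance over $[0,L]\times[0,T]$, and bound the boundary, initial, and $\partial_x\Sigma$ contributions uniformly in $\epsilon$ using Lemma~\ref{lm:nound} and assumption~\eqref{a_has}. You are in fact more explicit than the paper about the boundary fluxes and the control of the $\Sigma_x$ term, and your observation that the dissipation is an exact square (rather than merely signed) is precisely what delivers the $L^2$ rate; the paper records the conclusion with $|\,\cdot\,|$ in place of $|\,\cdot\,|^2$, which appears to be a typographical slip.
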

This convergence result toward equilibrium is an intermediate step in the proof of Proposition \ref{propo}.

We wish to go further and  avoid the two terms containing $x$-derivatives. For this goal, we define adapted  (heterogeneous) entropies by imposing more restrictions on the spatial dependence of the entropy pair.
%------------------------------------------------
\begin{definition}[An adapted  (heterogeneous) entropy family]
The pair of continuous functions  $(S,\Sigma)$, is an adapted  (heterogeneous) entropy for the system \eqref{a_evo2tubes}
if it satisfies the conditions of Definition~\ref{entropy} and if
\begin{equation}
 -\dfrac{\partial S}{\partial x}(h(v,x),x)+\dfrac{\partial \Sigma}{\partial x} (v,x)= 0, \qquad \forall v\geq 0. 
\label{homo_entropy}
\end{equation}
\label{def_homo_entropy}
\end{definition}

An example of such a family of entropies parametrized by $p\in  \mathbb{R}$ are the adapted Kru$\check{\mbox{z}}$kov entropies
 \begin{equation}
  S_p(u,x) = |u-h(k_p(x),x)|, \qquad \S_p(v,x) = |v-k_p(x)|, 
 \end{equation}
 where $(k_p)_{p \in \mathbb{R}}$ is the family of stationary solutions of the limit equation \eqref{CLlim}, which is equivalent to say 
\begin{equation*}
  h(k_{p}(x),x) -k_{p}(x)  = p.
\end{equation*} 
With this choice of an entropy pair, the above entropy inequality then reduces to 
$$
 \dfrac{\partial }{\partial t}[ S_p(u_{\epsilon},x)+ \Sigma_p(v_{\epsilon},x)  \; ] + \dfrac{\partial }{\partial  x}[\;S_p(u_{\epsilon},x)-\Sigma_p(v_{\epsilon},x)  \; ] \leq 0. 
$$

As a consequence, thanks to the strong compactness proven in next part, in the limit $ \eps \to 0$, the quasilinear conservation law \eqref {CLlim} comes with the family of adapted Kru$\check{\mbox{z}}$kov entropies. 
Indeed, if we define
\begin{equation}
\rho(x,t) : =h(v(x,t),x) + v(x,t), \qquad A(\rho,t): = h(v(x,t),x) - v(x,t), 
\label{cons}
\end{equation}
then, as in  \cite{AUD} and because $h$ is increasing, the following entropy inequality holds in the sense of distributions
\begin{equation}
 \dfrac{\p}{\p t}\Big| \rho(x,t)-\Big(k_p(x)+h(k_p(x),x)\Big)\Big| +  \dfrac{\p}{\p x}\Big| A(\rho,x)-A\Big(k_p(x)+h(k_p(x),x),x\Big)\Big| \leq 0.
 \label{limitt}
\end{equation}

%---------------------------------------------
\section{BV bounds}
%---------------------------------------------

For well prepared initial conditions \eqref{WP}, we present here our method to prove  BV bounds for appropriate quantities and strong compactness for $u_\epsilon$  and $v_\epsilon$, that are points (ii), (iii) of Proposition \ref{propo}.

\noindent{\bf 1st step. A bound on the time derivative at $t=0$.} 
Our first statement is 
 \begin{equation}
 \int_0^L | \dfrac{\partial u_{\epsilon}}{\partial t}(x,0)|dx \leq K_1(u^0), \qquad  \int_0^L | \dfrac{\partial v_{\epsilon}}{\partial t}(x,0)|dx \leq K_2(v^0).
\label{notice}
\end{equation}
Indeed, because initial conditions are at equilibrium, we have $ \dfrac{\partial v_{\epsilon}}{\partial t}(x,0) -  \dfrac{\partial v_{\epsilon}}{\partial x}(x,0)=0$.
We multiply this equality by $\sign\Big(\dfrac{\partial}{\partial t}v_{\epsilon}\Big)(x,0)$ and integrate over $[0,L]$ to get
\begin{equation}
 \int_0^L \Big| \dfrac{\partial v_{\epsilon}}{\partial t}(x,0)\Big|dx = \int_0^L \Big|\dfrac{\partial v_{\epsilon}}{\partial x}(x,0)\Big|dx 
  \leq K_2(v^0),
\end{equation}
and the above inequality follows from assumption \eqref{BV}. This  gives the first inequality of estimates \eqref{notice}. The same argument applies for $u_{\epsilon}$.

\noindent{\bf 2nd step. The time BV estimate.}
To prove \eqref{evolest2}, we differentiate each line of \eqref{a_evo2tubes} with respect to time and we multiply it respectively by $\sign\Big(\dfrac{\partial}{\partial t}u_{\epsilon} \Big)$ and $\sign\Big(\dfrac{\partial}{\partial t}v_{\epsilon} \Big)$ and integrate in $x$.
Adding the two lines, we obtain
\begin{equation}
\begin{aligned}
 \dfrac{d}{dt} \int_0^L [\; |\frac{\partial}{\partial t}u_{\epsilon}|+|\frac{\partial}{\partial t}v_{\epsilon}|\; ](x,t) dx
&\leq  
|\frac{\partial}{\partial t} u_0 |-| \frac{\partial}{\partial t} u_{\epsilon}(L,t)  |  +| \frac{\partial}{\partial t} v_{\epsilon}(L,t)  |-| \frac{\partial}{\partial t} v_{\epsilon}(0,t)  | \\
& = -\dfrac 12 | \frac{\partial}{\partial t} u_{\epsilon}(L,t)  | -| \frac{\partial}{\partial t} v_{\epsilon}(0,t)  | \leq 0,
\label{computation}
\end{aligned}
\end{equation}
which implies, using estimate \eqref{notice},
\begin{equation}
 \int_0^L [\; |\frac{\partial}{\partial t}u_{\epsilon}|+|\frac{\partial}{\partial t}v_{\epsilon}|\; ](x,t) dx \leq \int_0^L [\; |\frac{\partial}{\partial t}u_{\epsilon}|+|\frac{\partial}{\partial t}v_{\epsilon}|\; ](x,0) dx \leq K_1(u^0)+K_2(v^0).
 \label{evolest3}
\end{equation}

\noindent{\bf 3rd step. BV bound in $x$.}
 We complete the proof Proposition \ref{propo} (ii). Because of  space dependence of $h$ we cannot apply the same arguments for $x$-derivatives and build a single BV quantity. We add the two lines of \eqref{a_evo2tubes} and obtain
 
 \begin{equation*}
  \Big(  \dfrac{\p}{\p x}( u_{\epsilon} -v_{\epsilon} )\Big) (x,t)  =  -   \Big(  \dfrac{\p}{\p t}( u_{\epsilon} +v_{\epsilon}) \Big) (x,t) .
\end{equation*}
Using \eqref{evolest2}, we thus conclude that for all $t \geq 0$
 \begin{equation*}
\int_0^L \Big|\; \dfrac{\partial}{\partial x}(u_{\epsilon} -v_{\epsilon} )\; \Big|(x,t) dx \leq K_1(u^0)+K_2(v^0).
\end{equation*}

\noindent{\bf 4th step. Compactness.}

{Therefore we can conclude that $ \Big( u_{\epsilon} -v_{\epsilon}\Big) $ is compact in $L^1\Big([0,L]\times [0,T]\Big).$
Now, thanks to Proposition \eqref{L2},  $ \Big(h(v_{\epsilon},.) -u_{\epsilon}\Big) $ is compact in $L^1\Big([0,L]\times [0,T]\Big)$.
A combination of these two last compact embeddings gives us that
\begin{equation}
 h(v_{\epsilon},.) -v_{\epsilon} \qquad \mbox {is compact in }  L^1\Big([0,L]\times [0,T]\Big).
\end{equation}
Therefore, there is a function $Q\in L^\infty(0,L)$ such that, after extraction of a subsequence, 
\begin{equation*}
  h(v_{\epsilon},.) -v_{\epsilon}  \underset{\epsilon \longrightarrow 0}{\longrightarrow} Q(x),  
\end{equation*}
and because $v \mapsto h(v,x) -v$ is one-to-one, thanks to assumptions \eqref {as_h1}. In the same way, we conclude that the  sequence $ v_{\epsilon} $ converges. Gathering the informations above, Proposition~\ref{propo} (iii) is proved.

Then we can pass to the limit and obtain the last statement, Proposition~\ref{propo} (iv).

%---------------------------------------------
\section{Extension  to a more  specific relaxation system}
%---------------------------------------------

The method we have developed so far can be extended to a more realistic problem arising in kidney physiology that motivated this study.
The  system introduced and studied in \cite{TESP} is written, for $t\geq0$ and $x \in [0,L]$, 
\begin{equation}
\label{evo3tubes}
\left\{
\begin{aligned}
&\dfrac{\partial C_{\epsilon}^1}{\partial t}(x,t)+\dfrac{\partial C_{\epsilon}^1}{\partial x}(x,t)=\dfrac{1}{3\epsilon}\Big[ C_{\epsilon}^2(x,t)+h(C_{\epsilon}^3(x,t),x)) -2 C_{\epsilon}^1(x,t) \Big], 
\\
&\dfrac{\partial C_{\epsilon}^2}{\partial t}(x,t)+\dfrac{\partial C_{\epsilon}^2}{\partial x}(x,t)=\dfrac{1}{3\epsilon}\Big[ C_{\epsilon}^1(x,t)+h(C_{\epsilon}^3(x,t),x)) -2 C_{\epsilon}^2(x,t) \Big],
\\
&\dfrac{\partial C_{\epsilon}^3}{\partial t }(x,t)-\dfrac{\partial C_{\epsilon}^3}{\partial x}(x,t)=\dfrac{1}{3\epsilon}\Big[ C_{\epsilon}^1(x,t)+C_{\epsilon}^2(x,t)- 2 h(C_{\epsilon}^3(x,t),x))  \Big],
\\[3mm]
&C_{\epsilon}^1(0,t)=C_0^1, \qquad C_{\epsilon}^2(0,t)=C_0^2, \qquad C_{\epsilon}^3(L,t)=C_{\epsilon}^2(L,t),  \qquad t > 0, 
\end{aligned}
\right.
\end{equation}
Again, we want to prove uniform BV bounds for a small parameter $\epsilon$, which measures the ratio between ionic exchanges and flow along the tubules.

We make the same assumptions \eqref{as_h1} (the condition $1< \beta \leq \dfrac{\p h}{\p v}(v, x)$ can be relaxed to $1 \leq \dfrac{\p h}{\p v}(v, x)$)
and \eqref{a_has} and same hypotheses on the initial conditions, namely they belong to BV and are at equilibrium which means
$$
 C^1 =  C^2 = h(C^3,x ).
$$

 Following \eqref{cons}, the conservative quantity $\rho$ and the flux $B$ are defined by 
\begin{equation}
\rho(x,t) : = 2h(C^3(x,t),x) + C^3(x,t), \quad B(\rho,t): = 2h(C^3(x,t),x) - C^3(x,t) .
\end{equation}
For $p \in \mathbb{R}$, we define uniquely the steady state $k_p$ as 
 \begin{equation}
 B(k_p(x),x) = p.
  \label{k_p}
 \end{equation}

%---------------------------------------------------------
\begin{theorem}[Limit $\eps \rightarrow 0$] 
The functions $C_{\epsilon}^i$, $i=1, \; 2, \; 3$ converge almost everywhere to bounded functions $C^i$
and the quantity $\rho(x,t)$ is an entropy solution to
\begin{equation}
\left\{
\begin{aligned}
&\dfrac{\p }{\p t}\rho(x,t) +   \dfrac{\p}{\p x} B(\rho(x,t),x) =0, \qquad t>0, \; x \in[0,L], \\[3mm]
&\rho(0,t) = C_0^1 + C_0^2  + h^{-1}\big(\dfrac{C_0^1+C_0^2}{2},0\big), \qquad t>0, \\[3mm]
&\rho(x,0 ) = \rho^0(x), \qquad \rho^0(x):=C^0(x) + 2h(C^0(x),x),  \qquad x \in [0,L]. \\
\end{aligned}
\right.
\label{limit}
\end{equation}
\label{th2}
\end{theorem}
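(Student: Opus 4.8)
The plan is to reproduce, line for line, the four-step architecture that worked for the $2\times 2$ system: a uniform $L^\infty$ bound, an $L^2$ estimate forcing the three components onto the equilibrium manifold $C^1=C^2=h(C^3,x)$, a time-$\mathrm{BV}$ bound obtained by differentiating in $t$, and finally a spatial $\mathrm{BV}$ bound on a single conservative flux. Adding the three lines of \eqref{evo3tubes} kills the source (the three brackets sum to zero) and yields $\partial_t(C^1_\epsilon+C^2_\epsilon+C^3_\epsilon)+\partial_x(C^1_\epsilon+C^2_\epsilon-C^3_\epsilon)=0$, the $\epsilon$-level version of \eqref{limit}; this isolates $B_\epsilon:=C^1_\epsilon+C^2_\epsilon-C^3_\epsilon$ as the quantity to be controlled in $x$.

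For the $L^\infty$ bound I would repeat the stationary-supersolution/comparison argument of Lemma~\ref{lm:nound}: the stationary system keeps $C^1+C^2-C^3$ constant in $x$, so the boundary data bound that constant and the maximum principle, together with \eqref{as_h1}, closes the estimate. For the approach to equilibrium I would use the quadratic adapted entropy with density $\tfrac12(C^1)^2+\tfrac12(C^2)^2+\int_0^{C^3}h(\bar c,x)\,d\bar c$ and flux $\tfrac12(C^1)^2+\tfrac12(C^2)^2-\int_0^{C^3}h$; multiplying the three equations by $C^1$, $C^2$, $h(C^3,x)$ and summing, the relaxation contribution equals $-\tfrac1{3\epsilon}\big[(C^1-C^2)^2+(C^1-h)^2+(C^2-h)^2\big]$, while the only inhomogeneous term, $\int_0^{C^3}h_x$, is integrable in $x$ by \eqref{a_has} and the remaining boundary/flux terms are bounded by the $L^\infty$ estimate. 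Integrating in $(x,t)$ gives, exactly as in Proposition~\ref{L2}, that $C^1_\epsilon-h(C^3_\epsilon,x)$ and $C^2_\epsilon-h(C^3_\epsilon,x)$ tend to $0$ in $L^2$.

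The time-$\mathrm{BV}$ step is where the specific structure of the boundary conditions must be used, and I expect it to be the main obstacle. Writing $w^i=\partial_t C^i_\epsilon$, I would differentiate \eqref{evo3tubes} in time, multiply the $i$-th line by $\sign(w^i)$ and integrate in $x$. With $g=\partial_v h(C^3,x)\ge 1$, the relaxation terms produce $\sign(w^1)(w^2+gw^3-2w^1)+\sign(w^2)(w^1+gw^3-2w^2)+\sign(w^3)(w^1+w^2-2gw^3)$, and the elementary bound $\sign(a)\,b\le|b|$ applied pairwise shows the diagonal terms $-2|w^1|-2|w^2|-2g|w^3|$ absorb the off-diagonal ones, so this sum is $\le0$. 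For the boundary terms, the prescribed data give $w^1(0,t)=w^2(0,t)=0$ and the coupling condition $C^3(L,t)=C^2(L,t)$ gives $w^3(L,t)=w^2(L,t)$; since $C^1,C^2$ travel to the right and $C^3$ to the left, the transport boundary contributions collapse to $-|w^1(L,t)|-|w^3(0,t)|\le0$. Hence $\tfrac{d}{dt}\int_0^L(|w^1|+|w^2|+|w^3|)\,dx\le0$, and using the well-prepared initial data — at $t=0$ all three brackets vanish, so $\partial_t C^i(\cdot,0)=\mp\partial_x C^i(\cdot,0)$ is controlled by the initial $\mathrm{BV}$ norm — the total time variation stays bounded uniformly in $\epsilon$. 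The conservation law then gives $\int_0^L|\partial_x B_\epsilon|\,dx\le\int_0^L(|w^1|+|w^2|+|w^3|)\,dx$, so $B_\epsilon$ is $\mathrm{BV}$ in $x$ uniformly in $\epsilon$.

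Being bounded in both $x$- and $t$-variation, $B_\epsilon$ is compact in $L^1([0,L]\times[0,T])$; subtracting the relaxation remainder $C^1_\epsilon+C^2_\epsilon-2h(C^3_\epsilon,x)\to0$ shows that $2h(C^3_\epsilon,x)-C^3_\epsilon=B(C^3_\epsilon,x)$ is compact as well. Since $\partial_v B=2\partial_v h-1\ge1>0$, the map $c\mapsto B(c,x)$ is a homeomorphism, so $C^3_\epsilon$ — and then $C^1_\epsilon,C^2_\epsilon$ through $h$ — converge a.e. to bounded limits satisfying $C^1=C^2=h(C^3,x)$. Passing to the limit in the conservation law yields $\partial_t\rho+\partial_x B(\rho,x)=0$ with $\rho=2h(C^3,x)+C^3$; passing to the limit in the adapted Kru\v{z}kov entropy inequalities as in \eqref{limitt} yields the entropy inequalities; the boundary value $\rho(0,t)=C_0^1+C_0^2+h^{-1}\!\big(\tfrac{C_0^1+C_0^2}{2},0\big)$ is then identified by the Bardos--Le Roux--N\'ed\'elec analysis \cite{BLN} applied with those entropies, and $\rho(x,0)=\rho^0(x)$ follows from the convergence of the initial data.
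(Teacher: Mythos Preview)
Your proposal is correct and follows exactly the paper's own route: the paper simply states the four conclusions (i) uniform $L^\infty$ bounds, (ii) $L^2$ convergence to equilibrium, (iii) uniform $L^1$ bounds on $\partial_t C^i_\epsilon$, and (iv) a uniform $L^1$ bound on $\partial_x(C^1_\epsilon+C^2_\epsilon-C^3_\epsilon)$, referring back to the $2\times2$ argument for the details. Your write-up supplies precisely those details --- the quadratic entropy with dissipation $-\tfrac1{3\epsilon}\big[(C^1-C^2)^2+(C^1-h)^2+(C^2-h)^2\big]$, the sign estimate on the linearized relaxation matrix, and the boundary-term cancellation $|w^3(L)|=|w^2(L)|$ --- and then concludes compactness and the entropy formulation in the same way.
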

%---------------------------------------------------------
The entropy formulation of the conservation law, for adapted  (heterogeneous) entropies is written, following  \cite{AUD} again, 
$$
 \dfrac{\p}{\p t}\Big| \rho(x,t)-\Big(2 k_p(x)+h(k_p(x),x)\Big)\Big| +  \dfrac{\p}{\p x}\Big| B(\rho,x)-B\Big(2k_p(x)+h(k_p(x),x),x\Big)\Big| \leq 0.
$$
This family of inequalities is enough to prove uniqueness as in \cite{Nat_Ter} (see \cite{These_tournus} for details). 

The boundary conditions are understood in the following sense
\\
\textsf{Boundary condition at $x=0$}.  
For all $k_p$ such that $k_p(0) +2h(k_p(0),0) \in I\Big(\rho(0,t), h^{-1}\big(\dfrac{C_0^1+C_0^2}{2},0\big)+C_0^1+C_0^2\Big)$, we have 
 \begin{equation}
 \begin{aligned}
 \sign\Big( \rho(0,t) -h^{-1}(&\dfrac{C_0^1+C_0^2}{2},0)-C_0^1-C_0^2\Big) \\
 &\Big(B(\rho(0,t),0) - [2h(k_p(0),0)-k_p(0) ]\Big)
\leq 0,
 \end{aligned}
 \label{x=0}
 \end{equation}
\textsf{Boundary condition at $x=L$}. For all $k_p$ such that $k_p(L)+2h(k_p(L),L) \in I\Big(\rho(L,t), 2w_L + h^{-1}(w_L,L)\Big)$, we have 
 \begin{equation}
 \sign\Big( \rho(L,t) -2w_L-h^{-1}(w_L,L)\Big) \Big(B(\rho(L,t),L)-[2h(k_p(L),L)-k_p(L)] \Big) \geq 0,
 \label{x=L}
 \end{equation}
where $w_L(t):=\lim\limits_{\eps \longrightarrow 0} C_{\eps}^1(L,t)$, and where $I(a,b)$ denotes the interval $(\min(a,b),\max(a,b))$.
\\

Following the arguments we gave for the $2 \times 2$ system, and that we do not repeat, we can prove BV bounds in several steps
\\
(i)  $C_{\epsilon}^1$, $C_{\epsilon}^2$ and $C_{\epsilon}^3$ are bounded in $L^\infty((0,\infty)\times (0,L))$,
\\
(ii) $C_{\epsilon}^2(x,t)+h(C_{\epsilon}^3(x,t),x)) -2 C_{\epsilon}^1(x,t) \underset{ \epsilon \to 0 }{\longrightarrow}
 0$, $C_{\epsilon}^1(x,t)+h(C_{\epsilon}^3(x,t),x)) -2 C_{\epsilon}^2(x,t) \underset{ \epsilon \to 0 }{\longrightarrow}
 0$ in $L^2((0,\infty)\times (0,L))$,
\\
(iii) $\frac{\partial C_{\epsilon}^1}{\partial t}$, $\frac{\partial C_{\epsilon}^2}{\partial t}$, $\frac{\partial C_{\epsilon}^3}{\partial t}$ are bounded  in $L^\infty\big((0,\infty; L^1(0,L)\big)$,
\\
(iv) $\frac{\partial C_{\epsilon}^1}{\partial x}+ \frac{\partial C_{\epsilon}^2}{\partial x}+ \frac{\partial C_{\epsilon}^3}{\partial x}$ is bounded  in $L^\infty\big((0,\infty; L^1(0,L)\big)$.

These statements prove the convergence result in Theorem~\ref{th2}. 
\\

\noindent {\bf Acknowledgement.} 
Funding for this study was provided by the program EMERGENCE (EME 0918) of the Universit\'e Pierre et Marie Curie (Paris Univ. 6).

% Similarly, always use \cite{biblabelname} to refer to
% bibliographic references, which would then be entered in the
% bibliography via
          %\bibitem{biblabelname}.

          %
          % For figures use

          %\begin{figure}

          %The use of .eps files is encouraged, in which case you should
          %un-comment the \uspackage{graphics} command above, and use the
          %command
          %\include{figure.eps}
          % to insert the figure file.

          %\end{figure}

          % BibTeX users please use

          % \bibliographystyle{}

          % \bibliography{}

          %
\bibliographystyle{siam}

%\bibliography{/home/magali/Documents/bibli_these}
%\bibliography{bibli_these}          

\begin{thebibliography}{}

\end{thebibliography}


\begin{thebibliography}{10}

\bibitem{AUD}
{\sc E.~Audusse and B.~Perthame}, {\em {Uniqueness for scalar conservation laws
  with discontinuous flux via adapted entropies}}, Proc. Roy. Soc. Edinburgh
  Sect. A, 135 (2005), pp.~253--265.

\bibitem{BLN}
{\sc C.~Bardos, A.~Y. le~Roux, and J.-C. N{\'e}d{\'e}lec}, {\em {First order
  quasilinear equations with boundary conditions}}, Comm. Partial Differential
  Equations, 4 (1979), pp.~1017--1034.

\bibitem{Bouchut}
{\sc F.~Bouchut}, {\em {Non linear stability of finite volume methods for
  hyperbolic conservation laws and well balanced schemes for sources}},
  Birkha{\"u}ser-Verlag, 2004.

\bibitem{CLL}
{\sc G.~Q. Chen, C.~D. Levermore, and T.~P. Liu}, {\em {Hyperbolic conservation
  laws with stiff relaxation terms and entropy}}, Comm. Pure Appl. Math., 47
  (1994), pp.~787--830.

\bibitem{Dafermos}
{\sc C.~M. Dafermos}, {\em {Hyperbolic conservation laws in continuum
  physics}}, vol.~325 of {Grundlehren der Mathematischen Wissenschaften
  [Fundamental Principles of Mathematical Sciences]}, Springer-Verlag, Berlin,
  third~ed., 2010.

\bibitem{ALD}
{\sc A.-L. Dalibard}, {\em {Kinetic formulation for a parabolic conservation
  law. {A}pplication to homogenization}}, SIAM J. Math. Anal., 39 (2007),
  pp.~891--915 (electronic).

\bibitem{dipernaMVS}
{\sc R.~J. DiPerna}, {\em {Measure-valued solutions to conservation laws}},
  Arch. Rational Mech. Anal., 88 (1985), pp.~223--270.

\bibitem{GLPS}
{\sc F.~Golse, P.~L. Lions, B.~Perthame, and R.~Sentis}, {\em {Regularity of
  the moments of the solution of a transport equation}}, J. Funct. Anal., 76
  (1988), pp.~110--125.

\bibitem{James}
{\sc F.~James}, {\em {Convergence results for some conservation laws with a
  reflux boundary condition and a relaxation term arising in chemical
  engineering}}, SIAM J. Math. Anal., 29 (1998), pp.~1200--1223 (electronic).

\bibitem{Jin_Xin}
{\sc S.~Jin and Z.~P. Xin}, {\em {The relaxation schemes for systems of
  conservation laws in arbitrary space dimensions}}, Comm. Pure Appl. Math., 48
  (1995), pp.~235--276.

\bibitem{kruz}
{\sc S.~N. Kru\v{z}kov}, {\em {First order quasilinear equations with several
  independent variables.}}, Mat. Sb. (N.S.), 81 (123) (1970), pp.~228--255.

\bibitem{MUR}
{\sc F.~Murat}, {\em {Compacit{\'e} par compensation}}, Ann. Scuola Norm. Sup.
  Pisa Cl. Sci. (4), 5 (1978), pp.~489--507.

\bibitem{NAT98}
{\sc R.~Natalini}, {\em {Recent results on hyperbolic relaxation problems}}, in
  {Analysis of systems of conservation laws ({A}achen, 1997)}, vol.~99 of
  {Chapman \& Hall/CRC Monogr. Surv. Pure Appl. Math.}, Chapman \& Hall/CRC,
  Boca Raton, FL, 1999, pp.~128--198.

\bibitem{Nat_Ter}
{\sc R.~Natalini and A.~Terracina}, {\em {Convergence of a relaxation
  approximation to a boundary value problem for conservation laws}}, Comm.
  Partial Differential Equations, 26 (2001), pp.~1235--1252.

\bibitem{PER2}
{\sc B.~Perthame}, {\em {Kinetic formulation of conservation laws}}, vol.~21 of
  {Oxford Lecture Series in Mathematics and its Applications}, Oxford
  University Press, Oxford, 2002.

\bibitem{T}
{\sc M.~Tournus}, {\em {An asymptotic study to explain the role of active
  transport in models with countercurrent exchangers}}, SeMA Journal:
  Bolet{\'i}n de la Sociedad Espa{\~n}ola de Matem{\'a}tica Aplicada, 59
  (2012), pp.~19--35.

\bibitem{These_tournus}
\leavevmode\vrule height 2pt depth -1.6pt width 23pt, {\em {Mod{\`e}les
  d'{\'e}changes ioniques dans le rein: th{\'e}orie, analyse asymptotique et
  applications num{\'e}riques}}, PhD thesis, July 2013.

\bibitem{TESP}
{\sc M.~Tournus, A.~Edwards, N.~Seguin, and B.~Perthame}, {\em {Analysis of a
  simplified model of the urine concentration mechanism}}, Network
  Heterogeneous Media, 7 (2012), pp.~989 -- 1018.

\bibitem{TSPTE}
{\sc M.~Tournus, N.~Seguin, B.~Perthame, S.~R. Thomas, and A.~Edwards}, {\em {A
  model of calcium transport along the rat nephron}}, American Journal of
  Physiology - Renal Physiology, 305 (2013), pp.~F979--F994.

\bibitem{Tzavaras_relative}
{\sc A.~E. Tzavaras}, {\em {Relative entropy in hyperbolic relaxation}},
  Commun. Math. Sci., 3 (2005), pp.~119--132.

\bibitem{ShuYi}
{\sc S.~Y. Zhang and Y.~G. Wang}, {\em {Well-posedness and asymptotics for
  initial boundary value problems of linear relaxation systems in one space
  variable}}, Z. Anal. Anwendungen, 23 (2004), pp.~607--630.

\end{thebibliography}

% Non-BibTeX users please use
% \begin{thebibliography}{10}
% 
%           %
% 
%           % and use \bibitem to create references.
% 
%           %
% 
% \bibitem{A}Author, {\em title of paper}, Journal Name
% Volume,  page numbers, year.
% 
%           % Format for Journal Reference. For example
% 
% \bibitem{T1}C. Taubes, {\em The Seiberg-Witten invariants
%  and symplectic forms}, Math. Res. Letters, 1, 809--822, 1994.
% \end{thebibliography}
\end{document}